\newcommand{\bbP}{\mathbb{P}}
\newcommand{\bbN}{\mathbb{N}}
\newcommand{\bbZ}{\mathbb{Z}}
\newcommand{\bbR}{\mathbb{R}}
\newtheorem{thm}{Theorem}[section]
\newtheorem{lemma}[thm]{Lemma}
\newtheorem{cor}[thm]{Corollary}
\newtheorem*{conjecture*}{Density Conjecture}
\theoremstyle{remark}
\theoremstyle{definition}
\title{Coupling Invasion and First Passage Percolation}
\author{Aldo Morelli}
\address{Department of Mathematics and Statistics\\
University of New Mexico}
\email{amorelli@unm.edu}
\begin{document}
    \maketitle
    \vspace{-2em}
 \begin{abstract}
    It is well known that a continuous phase transition in Bernoulli bond percolation on the integer lattice is equivalent to a vanishing probability a vertex is invaded in invasion percolation. We provide a coupling between invasion percolation and first passage percolation with log-uniform passage times. This yields a new equivalent condition for a continuous phase transition in bond percolation.
\end{abstract}

\section{Introduction}
      The study of percolation models began in 1957 with Broadbent and Hammersley \cite{BH57}, who introduced, for directed crystal lattices, both Bernoulli bond percolation\textemdash a process of forming a random subgraph %
     where each edge is independently included according to a Bernoulli random variable\textemdash and its associated critical threshold. 
     This model has been used to study a variety of phenomena, including infectious disease, cyber security, and quantum magnetism \cite{ML07,CJLP24, ZYH24}. A significant open problem in bond percolation is the existence of an infinite connected component at the critical threshold on the lattice $\bbZ^d$ for dimensions $2<d<11$.   
    
    The model of invasion percolation was introduced in 1983 by Wilkinson and Willemsen \cite{WW83}, inspired by the work of Chandler, Koplik, Lerman, and Willemsen \cite{CKLW82} in the previous year on the flow of fluids in porous media. Invasion percolation is a random process where edges in a graph are independently given a weight uniformly from 0 to 1 and non-invaded edges and vertices are sequentially invaded according to the minimal weight of a non-invaded edge adjacent to any invaded edge. Later, Chayes, Chayes, and Newman \cite{CCN87} utilized invasion percolation to study Bernoulli bond percolation. In so doing, they provided an equivalence between the density of the invaded set and the existence of an infinite connected component at the critical threshold of bond percolation. In 2015, Damron, Lam, and Wang \cite{DLW15} applied invasion percolation to study the properties of two dimensional first passage percolation when edges are given zero passage time with probability equal to the critical threshold of two dimensional bond percolation. 
    
    We provide a coupling between invasion percolation and first passage percolation where passage times are distributed according to a log-uniform distribution, with the hope that this coupling will allow for the study of invasion percolation utilizing the techniques of first passage percolation. When coupled, the invaded vertices within the ball of radius $r$ about the origin are identical to those with passage time less than the passage time to the boundary of the ball with radius $R$ about the origin with probability $1-\epsilon$ for large $R\gg r$, see Theorem \ref{thm:coupling} for a precise statement. Corollary \ref{cor:FPPandBondPerc} follows from this theorem, relating the continuity of the first order phase transition of Bernoulli bond percolation to a condition on vertices in log-uniform FPP having passage time less than the boundary of the ball of radius $R$. We also provide simulations of the first passage process with log-uniform distribution on $\bbZ^2$, giving some evidence that the probability a vertex within the ball of radius $R$ has passage time greater than the passage time to the boundary of the ball approximately follows a power law in $|x|$ for fixed $R$. 
    
    \section{Preliminaries}
    Consider the $d$-dimensional lattice $\bbZ^d$ with nearest neighbor edge set $E$ and origin $0$, endowed with the geodesic distance
    \begin{align*}
        d(v, u):=\min \{|\gamma|:\gamma\text{ is a path from $v$ to $u$}\}.
    \end{align*} 
    For $R\in\bbN$, let $B_R=\{v\in \bbZ^d: d(0, v)\le R\}$, $E_R$ denote the set of edges between vertices in $B_R$, and $\partial B_R$ denote the boundary of $B_R$, i.e.\ the set of vertices of distance exactly $R$ from the origin. 

    \subsection{Bond Percolation}
    Assign independently to each edge in $E$ a \emph{weight} uniformly from $[0, 1]$, and let $w(e)$ denote the weight of the edge $e$. Following \cite{DC18}, we define a \emph{configuration} $\sigma =(w(e): e\in E)$ to be an element of $[0,1]^{E}$. We consider the probability space $\Omega = ([0,1]^{E}, \mathcal{F}, \bbP)$ where $[0,1]^{E}$ is the set of edge weight assignments to $E$, $\mathcal{F}$ is the $\sigma$-algebra generated by events depending on finitely many edges, and $\bbP$ is the corresponding product measure, where each coordinate is a uniformly distributed random variable on $[0,1]$.

    Given a parameter $p$, define Bernoulli bond percolation to be the random subgraph of $\bbZ^d$ consisting of all vertices as well as the edges $e\in E$ such that $w(e)<p$. For $p\in[0,1]$, define
    \begin{align*}
        \theta_d(p):=\bbP[0 \text{ is in an infinite cluster of bond percolation}]
    \end{align*}
    and 
    \begin{align*}
        p_{c,d} := \inf\{p:\theta_d(p)>0\}.
    \end{align*} 
    $\theta_d(p)$ is known to be continuous on $[0, p_{c,d})\cup (p_{c,d},1]$ for all $d$ and right continuous at $p_{c,d}$. To show the continuity of $\theta_d(p)$, it thus suffices to show that $\theta_d(p_{c,d})=0$. For $2<d<11$, this problem is open. 

    \subsection{Invasion Percolation} 
    Utilizing the same probability space $\Omega$ as with bond percolation, define \emph{invasion percolation} (IP) for configuration $\sigma\in\Omega$ to be a process of invading vertices and edges, beginning with $0$, and at each subsequent step invading the edge (as well as its incident vertices) with the smallest weight among the non-invaded edges neighboring already invaded vertices. Because $E$ is countable, almost surely all assigned weights are distinct, and we will restrict ourselves to this case in what follows so that there will never be any ambiguity as to which edge to invade. This process produces a corresponding sequence of invaded vertices, where we discard any repeat vertices, leaving at most one newly invaded vertex in each step. As such, we see that any vertex in $B_R\setminus \partial B_R$ will be invaded in the same step as some edge in $E_R$. We assign a total ordering on the invaded vertices and edges by associating an invaded vertex/edge to the step in which it was invaded and comparing these steps, and denote this order with $<_{\texttt{IP}}$. When comparing vertices and edges which were invaded in the same step, we choose the convention of edges being invaded before vertices.  
    
    Denoting $\mathscr{I}(x)$ to be the event that a vertex $x\in \bbZ^d$ is invaded, in \cite{CCN87} Chayes, Chayes, and Newman showed that for all $d$,
    \begin{equation}\label{eqn:invasion_condition}
     \inf_{x\in \bbZ^d}\bbP[\mathscr{I}(x)]=0\iff \theta_d(p_{c,d})=0,
    \end{equation}
    giving a condition for the continuity of $\theta_d$. 

    \subsection{First Passage Percolation} 
    Following \cite{ADH16}, first passage percolation (FPP) is a stochastic model which assigns a random metric to the lattice $\bbZ^d$. Within this model, each edge in $E$ is given a \emph{passage time} i.i.d.\ according to some distribution $F$. Throughout this paper we will be concerned only with $F=\text{Law}(e^{KT})$, where $T\sim U(0,1)$ and $K>0$ is fixed, and so let $\tau_{K, e}$ denote the passage time of the edge $e$ sampled from the distribution of $e^{KT}$.
    The passage time of a path $\gamma$ is defined by
    \begin{align*}
        T_K(\gamma) := \sum_{e\in \gamma} \tau_{K, e},
    \end{align*}
    and a metric on $E$ is given by 
    \begin{align*}
        T_K(x, y):= \inf_{\gamma} T_K(\gamma)
    \end{align*}
    for $x,y\in \bbZ^d$ and paths $\gamma$ from $x$ to $y$. %
    $T_K(x,y)$ is referred to as the \emph{passage time} between $x$ and $y$. Because $F$ is continuous, 
    all passage times are almost surely distinct, and almost surely there exists a minimizing path for each pair of vertices in $\bbZ^d$. Going forward we restrict ourselves only to such configurations. 
    
    For fixed $R, K\ge 0$, we define the passage time to $\partial B_R$ by
    \begin{align*}
        T_K(0, \partial B_R):= \min_{v\in \partial B_R} T_K(0, v),
    \end{align*}
    and define the passage time from the origin to an edge $e\in E$ by 
    \begin{align*}
        T_K(0, e):=\min(T_K(0, v_1), T_K(0, v_2))+\tau_{K,e},
    \end{align*}
    where $v_1, v_2$ are the vertices incident to $e$. For a vertex $x\in B_R\setminus\partial B_R$ and path $\gamma$ from $0$ to $x$ such that $T_K(\gamma)=T_K(0, x)$, by the minimality of $\gamma$, 
    \begin{equation}\label{eqn:vertextoedge}
        T_K(0, x)=T_K(0, e)
    \end{equation}
    for the final edge $e\in E_R$ of $\gamma$. We assign a total ordering to the edges in $E_R$ such that $e<_{\texttt{FPP}}e'$ whenever $T_K(0, e)<T_K(0, e')$. %
    
    \section{Coupling and Main Theorem} In this section, we couple invasion percolation and FPP with passage time distribution $e^{KT}$ by letting $\tau_{K, e}=e^{K w(e)}$ for a configuration $\sigma\in\Omega$. For large $K$, the behavior of the two coupled models is often identical for vertices close to the origin. More specifically, we say that, when coupled on a configuration $\sigma\in\Omega$, IP \emph{contains} $(K, R)$ log-uniform FPP on a set of vertices $V$ if any vertex $v\in V$ such that $T_K(0, v)<T_K(0, \partial B_R)$ is invaded. Similarly we say that $(K, R)$ log-uniform FPP \emph{contains} IP on $V$ if any vertex $v\in V$ which is invaded also is such that $T_K(0, v)<T_K(0, \partial B_R)$. Letting 
    \[\delta(R, \epsilon)=\frac{1-(1-\epsilon)^{\frac{1}{|E_R|}}}{|E_R|-1},\]
    we have the following:

    \begin{thm}\label{thm:coupling} %
        For any $\epsilon >0$, $r\ge 0$, there exists an $R_0=R_0(\epsilon, r)$ such that for all $R\ge R_0$  
        \begin{equation}\label{eqn:IPcontainsFPP}
             \bbP[\text{IP contains }(K,R)\text{ log-uniform FPP on } B_R] \ge 1-\epsilon,
        \end{equation}
        and
        \begin{equation}\label{eqn:FPPcontainsIP}
            \bbP[(K, R)\text{ log-uniform FPP contains IP on } B_r] \ge 1-\epsilon
        \end{equation}
        where $K=K(R, \epsilon/2):=\tfrac{\log|E_R|}{\delta(R, \epsilon/2)}=O(\epsilon^{-1}R^{4d}\log R )$ as $R\to\infty, \epsilon\to 0^{+}$.
    \end{thm}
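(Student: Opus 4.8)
The plan is to exploit the fact that, under the coupling $\tau_{K,e}=e^{Kw(e)}$, a path's passage time $T_K(\gamma)=\sum_{e\in\gamma}e^{Kw(e)}$ is, for large $K$, dominated by $e^{K\max_{e\in\gamma}w(e)}$. Thus minimizing $T_K$ amounts, to leading order, to minimizing the largest weight on the path, i.e.\ to a minimax (bottleneck) problem, with the lower-order terms enforcing a \emph{lexicographic} minimax. Invasion percolation is governed by exactly the same bottleneck structure, so the two orderings should coincide once $K$ is taken large enough that the exponential separation between distinct weights beats the number of competing paths. Concretely, I would fix the event $G$ that the $|E_R|$ weights of edges in $E_R$ have all consecutive order-statistic gaps strictly larger than $\delta(R,\epsilon/2)$, and then argue that on $G$, with the stated $K$, the two models agree.

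First I would record the purely deterministic invasion fact: for every level $t$, invasion fills the entire cluster $C(t)$ of the origin in $\{w<t\}$ before it ever crosses an edge of weight $\ge t$, because whenever $C(t)$ is not yet fully invaded the invaded set has a boundary edge lying inside $C(t)$, necessarily of weight $<t$, which is preferred. Writing $\widetilde M(e):=\max\{\min(M(v_1),M(v_2)),\,w(e)\}$ for the minimax cost of reaching and traversing $e$ from $0$ (with $M(v)$ the minimax cost to $v$), this shows invasion invades the edges of $E_R$ in increasing order of $\widetilde M$. On the probabilistic side, a standard spacings computation for $|E_R|$ i.i.d.\ uniforms gives $\bbP[G]=(1-(|E_R|-1)\delta(R,\epsilon/2))^{|E_R|}=1-\epsilon/2$, which is precisely the identity the formula for $\delta(R,\epsilon)$ encodes. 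On $G$, since every distinct pair of weights in $E_R$ differs by more than $\delta(R,\epsilon/2)$ and $K$ is chosen so that $e^{K\delta(R,\epsilon/2)}=|E_R|$, the exponential gap dominates the length (at most $|E_R|$) of any self-avoiding path in $B_R$.

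Combining these, I would prove the coupling lemma: on $G$, for $v\in B_R$ one has $T_K(0,v)<T_K(0,\partial B_R)$ if and only if $v$ is invaded before invasion first reaches $\partial B_R$. The domination estimate upgrades $\widetilde M(e)<\widetilde M(e')$ to $T_K(0,e)<T_K(0,e')$, and recursively resolves ties where two paths share their maximal edge, so that $<_{\texttt{FPP}}$ and $<_{\texttt{IP}}$ coincide on all of $E_R$ reached up to the first boundary crossing; note also that any path realizing $T_K(0,v)<T_K(0,\partial B_R)$ must stay inside $B_R$, since leaving it forces the passage time to exceed $T_K(0,\partial B_R)$. Inequality \eqref{eqn:IPcontainsFPP} is then immediate: on $G$ a vertex $v\in B_R$ with $T_K(0,v)<T_K(0,\partial B_R)$ is invaded before the boundary is reached, hence invaded, giving $\bbP[\text{IP contains FPP on }B_R]\ge\bbP[G]\ge1-\epsilon$.

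For \eqref{eqn:FPPcontainsIP} the extra input is a timing estimate. Since $B_r$ is finite, invasion invades vertices of $B_r$ at only finitely many steps, so the last such step $N_r$ is almost surely finite, whereas reaching $\partial B_R$ requires at least $R$ invasion steps. Hence $\bbP[N_r\ge R]\to0$, and I would choose $R_0$ so large that $\bbP[N_r\ge R]\le\epsilon/2$ for $R\ge R_0$. On $G\cap\{N_r<R\}$ every invaded $v\in B_r$ is invaded strictly before invasion reaches $\partial B_R$, so the coupling lemma together with \eqref{eqn:vertextoedge} yields $T_K(0,v)<T_K(0,\partial B_R)$, and a union bound gives probability at least $1-\epsilon/2-\epsilon/2=1-\epsilon$; the asymptotic growth of $K$ is read off by substituting $|E_R|=O(R^{d})$ and $\delta(R,\epsilon/2)^{-1}=O(\epsilon^{-1}|E_R|^{2})$ into $K=\log|E_R|/\delta(R,\epsilon/2)$. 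The main obstacle is the coupling lemma's exact order-matching, in particular handling the ties in $\widetilde M$ created by vertices reached across a common bottleneck edge: there the agreement of $<_{\texttt{FPP}}$ with the invasion order is decided by sub-dominant terms, which is exactly why the gap event must control every order-statistic spacing rather than merely the largest weight, and why the recursion together with the locality bookkeeping that keeps competing paths inside $B_R$ must be carried out carefully.
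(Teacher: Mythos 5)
Your proposal is correct and follows the same overall architecture as the paper: the same gap event (the paper's $\mathscr{T}_{\delta}$, your $G$) with the identical order-statistics spacing computation, an order-agreement lemma on that event, a timing estimate showing vertices of $B_r$ are rarely invaded after the boundary is reached, and a union bound. Where you genuinely diverge is in how the order-agreement lemma would be proved. The paper (Lemma \ref{lem:Backbone}) never introduces the minimax function $\widetilde M$; it runs a simultaneous induction on the two orders $<_{\texttt{IP}}$ and $<_{\texttt{FPP}}$, using the recursion $T_K(0,e)=\min_{f\sim e}T_K(0,f)+\tau_{K,e}$ (Lemma \ref{lem:edgepassagetime}) to build a chain of adjacent edges back from $e_{k+1}$, and then applies exactly your domination estimate ($e^{K\delta}=|E_R|$ beats the chain length $N+1<|E_R|$) to contradict $e'<_{\texttt{FPP}}e_{k+1}$. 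This inductive bookkeeping is precisely what absorbs the issue you flag as your main obstacle\textemdash the ties in $\widetilde M$ behind a common bottleneck, where agreement is decided by sub-dominant terms\textemdash so the paper's route avoids ever having to formalize the ``lexicographic minimax'' recursion, while yours makes the bottleneck mechanism conceptually explicit but leaves its tie-resolution step as an unexecuted (though plausible) plan. Your treatment of the timing lemma is the paper's Lemma \ref{lem:NoLateSmallInvasions} argued directly (a.s.\ finiteness of the last invasion step in $B_r$ plus monotone convergence of the decreasing events) rather than by contradiction, which is an inessential difference; and your asymptotics $K=O(\epsilon^{-1}R^{2d}\log R)$ via $\delta^{-1}=O(\epsilon^{-1}|E_R|^2)$ and $|E_R|=O(R^d)$ are consistent with, indeed sharper than, the stated $O(\epsilon^{-1}R^{4d}\log R)$.
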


    Theorem \ref{thm:coupling} gives us the following corollary regarding $\theta_d(p_{c,d})$. 
    \begin{cor}\label{cor:FPPandBondPerc}
    For $K=K(R, \epsilon/2)$ as in Theorem \ref{thm:coupling},
        \begin{align*}
            \inf_{x\in\bbZ^d}\liminf_{R\to\infty}\bbP[T_{K}(0, x)<T_K(0, \partial B_R)]=0\iff \theta_d(p_{c,d})=0.
        \end{align*}
    \end{cor}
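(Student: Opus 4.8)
The plan is to derive Corollary~\ref{cor:FPPandBondPerc} by combining the coupling of Theorem~\ref{thm:coupling} with the Chayes--Chayes--Newman equivalence~\eqref{eqn:invasion_condition}. In view of~\eqref{eqn:invasion_condition}, it suffices to show that the left-hand condition
\[
\inf_{x\in\bbZ^d}\liminf_{R\to\infty}\bbP[T_{K}(0, x)<T_K(0, \partial B_R)]=0
\]
is equivalent to $\inf_{x\in\bbZ^d}\bbP[\mathscr{I}(x)]=0$. So the real content is to transfer the event $\{T_K(0,x)<T_K(0,\partial B_R)\}$ to the invasion event $\mathscr{I}(x)$ in the $R\to\infty$ limit, using the two inclusions furnished by the coupling.

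First I would fix a vertex $x$ and compare the two probabilities for large $R$. Fix $\epsilon>0$. For the direction that controls $\bbP[\mathscr{I}(x)]$ from above, apply~\eqref{eqn:FPPcontainsIP} with $r=d(0,x)$: on the event that FPP contains IP on $B_r$, every invaded vertex of $B_r$ (in particular $x$, if invaded) has passage time below $T_K(0,\partial B_R)$. Hence for all $R\ge R_0(\epsilon,r)$,
\[
\bbP[\mathscr{I}(x)]\le \bbP[T_K(0,x)<T_K(0,\partial B_R)]+\epsilon .
\]
Taking $\liminf_{R\to\infty}$ and then letting $\epsilon\to0^+$ gives $\bbP[\mathscr{I}(x)]\le \liminf_{R\to\infty}\bbP[T_K(0,x)<T_K(0,\partial B_R)]$. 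For the reverse bound I would use~\eqref{eqn:IPcontainsFPP}: whenever IP contains $(K,R)$ log-uniform FPP on $B_R$, any $x\in B_R$ with $T_K(0,x)<T_K(0,\partial B_R)$ is invaded, so on that high-probability event $\{T_K(0,x)<T_K(0,\partial B_R)\}\subseteq\mathscr{I}(x)$, yielding
\[
\bbP[T_K(0,x)<T_K(0,\partial B_R)]\le \bbP[\mathscr{I}(x)]+\epsilon
\]
for $R\ge R_0(\epsilon,R)$; note $x\in B_R$ holds for all large $R$. Again passing to the $\liminf$ and $\epsilon\to0^+$ shows $\liminf_{R\to\infty}\bbP[T_K(0,x)<T_K(0,\partial B_R)]\le \bbP[\mathscr{I}(x)]$. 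Combining the two bounds gives, for every fixed $x$,
\[
\liminf_{R\to\infty}\bbP[T_K(0,x)<T_K(0,\partial B_R)]=\bbP[\mathscr{I}(x)].
\]
Taking $\inf_{x\in\bbZ^d}$ of both sides and invoking~\eqref{eqn:invasion_condition} then closes the argument.

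The point that needs the most care is the quantifier bookkeeping in the inclusion~\eqref{eqn:IPcontainsFPP}, namely that $R_0$ there depends on the radius of the set on which containment is asserted; since~\eqref{eqn:IPcontainsFPP} uses the full ball $B_R$ while~\eqref{eqn:FPPcontainsIP} only uses $B_r$, I must make sure that for a fixed target vertex $x$ both inclusions apply simultaneously for all sufficiently large $R$, and that $K=K(R,\epsilon/2)$ is the coupling parameter in both displayed events and in the statement of the corollary. A secondary subtlety is that the equality of probabilities is proved \emph{pointwise} in $x$ before taking the infimum, so I would emphasize that the infimum can be exchanged with the limiting identity because it is taken after the per-$x$ equality has been established, and that no uniformity in $x$ is required for the final iff. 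This exchange, together with the direct appeal to~\eqref{eqn:invasion_condition}, is where I expect the proof to require the most explicit justification.
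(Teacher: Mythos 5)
Your proposal is correct and follows essentially the same route as the paper: both directions rest on exactly the two containments \eqref{eqn:FPPcontainsIP} (applied on a ball $B_r \ni x$) and \eqref{eqn:IPcontainsFPP}, combined with the Chayes--Chayes--Newman equivalence \eqref{eqn:invasion_condition}, with the same $\epsilon$-bookkeeping. The one caveat is your intermediate claim of exact pointwise equality $\liminf_{R\to\infty}\bbP[T_K(0,x)<T_K(0,\partial B_R)]=\bbP[\mathscr{I}(x)]$: since $K=K(R,\epsilon/2)$ depends on $\epsilon$, letting $\epsilon\to 0^{+}$ changes the quantity under the $\liminf$ itself (a subtlety you flag but do not fully resolve), so only the two-sided bounds within $\pm\epsilon$ are justified---which is all the paper ever claims, and all that the final equivalence requires.
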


    Through the coupling of Theorem \ref{thm:coupling}, by taking $R$ and $K$ large enough, we may guarantee with arbitrarily high probability that the behavior of the invasion percolation and short term log-uniform FPP models at any specific vertex are identical, in the sense that a vertex is invaded exactly when its passage time is less than that of $\partial B_R$. In such cases, we may replace the condition on vertices being invaded in equation \eqref{eqn:invasion_condition} with the condition that vertices have passage time less than $\partial B_R$, leading to Corollary \ref{cor:FPPandBondPerc}.  

    \subsection{Proof Sketch} By first establishing that on the event that no two weights of edges inside $E_R$ are within some small $\delta$ of one another, the orders $<_{\texttt{IP}}$ and $<_{\texttt{FPP}}$ are identical up until the first invaded vertex in $\partial B_R$, we immediately see that with high probability any vertex which has passage time less than $T_K(0, \partial B_R)$ must have been invaded, giving \eqref{eqn:IPcontainsFPP}. Additionally, because the probability that any vertex close to the origin is invaded after a vertex in $\partial B_R$ may be made arbitrarily small by taking $R$ large enough, the reverse holds with high probability close to the origin, giving \eqref{eqn:FPPcontainsIP}.

    \section{Proof of Theorem \ref{thm:coupling}}
    
    Before giving the proof of Theorem \ref{thm:coupling}, we state the following lemmas, which are proven in Section \ref{sec:proofs_of_lemmas}.

    \begin{restatable}{restatelemma}{backbone}
    \label{lem:Backbone}
    Let $v^{\texttt{IP}}_R$ denote the first vertex in $\partial B_R$ to be invaded and let $\mathscr{T}_{\delta}$ denote the event that no two edge weights in $E_R$ are within $\delta$ of one another. With $K(R, \epsilon)$ as in Theorem \ref{thm:coupling}, on the event $\mathscr{T}_{\delta}$, for all edges $e, e'$ such that $e, e'<_{\texttt{IP}} v_R^{\texttt{IP}}$, $e<_{\texttt{IP}} e'$ if and only if $e<_{\texttt{FPP}} e'$.
    \end{restatable}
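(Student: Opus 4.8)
The plan is to reduce the comparison of first passage times to a purely combinatorial comparison of edge weights along optimal paths, and then to recognize that this combinatorial order is exactly the order in which invasion percolation (Prim's algorithm from the origin) discovers edges. Throughout I work on the event $\mathscr{T}_{\delta}$, so that all weights in $E_R$ are distinct and any two of them differ by at least $\delta$, and I take $K=\log|E_R|/\delta$ as in Theorem \ref{thm:coupling}.

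First I would prove a reduction lemma: for two paths $\gamma,\gamma'$ with edges in $E_R$, $T_K(\gamma)<T_K(\gamma')$ if and only if the largest-weight edge in the symmetric difference of their edge sets belongs to $\gamma'$. The point is that if $w^{\dagger}$ is that largest distinctive weight and it lies in $\gamma'$, then $\gamma'$ contributes at least $e^{Kw^{\dagger}}$, while the edges of $\gamma$ not shared with $\gamma'$ all have weight at most $w^{\dagger}-\delta$ and number at most $|E_R|$, so they contribute at most $|E_R|e^{K(w^{\dagger}-\delta)}=e^{Kw^{\dagger}}\cdot|E_R|e^{-K\delta}$. The choice $K=\log|E_R|/\delta$ is calibrated precisely so that $|E_R|e^{-K\delta}\le 1$, making this contribution negligible against the shared part; this single domination estimate is the only place the value of $K$ enters. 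Consequently $T_K(0,\cdot)$ orders paths lexicographically by their decreasing-sorted weight sequences (their \emph{signatures}), the $<_{\texttt{FPP}}$-optimal path to a vertex is the lexicographic-minimax path, and, via \eqref{eqn:vertextoedge}, comparing $T_K(0,e)$ with $T_K(0,e')$ reduces to comparing these signatures.

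Next I would identify these lexicographic-minimax paths with paths in the invasion tree. For a vertex $u$ invaded before $v_R^{\texttt{IP}}$, the invasion (Prim) tree supplies a path $\pi_u$ from $0$ to $u$; using the minimum-spanning-tree cycle property, any competing path must traverse an edge whose weight exceeds the bottleneck $\beta(u)=\min_{\gamma}\max_{e\in\gamma}w(e)$ of $\pi_u$, hence by at least $\delta$, so by the reduction lemma $\pi_u$ is $<_{\texttt{FPP}}$-optimal and $T_K(0,u)$ is realized along invaded edges. With this bridge in hand I would run an induction on the invasion steps, showing that after $t$ steps the set of invaded edges equals the set of the $t$ edges of smallest $T_K(0,\cdot)$ in $E_R$, invaded in increasing $T_K$ order. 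In the inductive step the newly invaded edge $e_{(t+1)}$, the minimum-weight edge on the boundary of the invaded cluster $C_t$, beats every remaining edge: a competitor sharing the same approach to $C_t$ differs only in a strictly larger final weight, while any competitor whose optimal path must still cross out of $C_t$ picks up a boundary edge of weight at least $w(e_{(t+1)})$, in either case producing a lexicographically larger signature; the same comparison shows $e_{(t+1)}$ has larger passage time than all previously invaded edges (back-edges joining two already-invaded vertices are included in the induction and handled identically). Matching the two orders step by step yields $e<_{\texttt{IP}}e'\iff e<_{\texttt{FPP}}e'$ for all $e,e'<_{\texttt{IP}}v_R^{\texttt{IP}}$.

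I expect the main obstacle to lie in this identification step rather than in the $K$-calibrated estimate. The difficulty is that invasion percolation can invade an edge whose weight exceeds the current boundary minimum, so the crude bottleneck bound $T_K(0,u)\le|E_R|e^{K\beta(u)}$ is too lossy to compare $e_{(t+1)}$ against a lower-weight non-invaded edge lying behind a high-weight bottleneck; only after cancelling the common invaded prefix shared by the two passage times does the comparison become decidable, which is exactly what the lexicographic reformulation accomplishes. A secondary technical point is confirming that the $<_{\texttt{FPP}}$-optimal paths to vertices invaded before $v_R^{\texttt{IP}}$ stay inside $E_R$, so that the event $\mathscr{T}_{\delta}$, which constrains only the weights in $E_R$, actually governs every comparison; this should follow from the fact that the invasion has not yet crossed $\partial B_R$, but it must be argued rather than assumed.
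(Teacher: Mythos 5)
Your plan is organized genuinely differently from the paper's proof: you first prove a global reduction lemma (passage times of paths compare lexicographically, via the largest weight in the symmetric difference, once $e^{K\delta}=|E_R|$), then identify invasion paths as the lexicographically optimal/minimax paths via the MST cycle property, and finally match the two orders step by step. The paper never compares whole paths at all: it works with the edge recursion $T_K(0,e)=\min_{f\sim e}T_K(0,f)+\tau_{K,e}$ (Lemma \ref{lem:edgepassagetime}), builds a finite chain of adjacent edges leading from $e_{k+1}$ back into the already-matched initial segment of the order, and kills a hypothetical intervening edge $e'$ with the single estimate $\tau_{K,e'}\ge |E_R|\tau_{K,e_{\max}}$. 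The calibration you isolate is exactly the paper's estimate, and your steps are correct for paths all of whose edges lie in $E_R$ (modulo the phrasing: a competitor need not contain an edge exceeding the bottleneck $\beta(u)$ if it shares the bottleneck edge; the correct statement is about the largest edge of the symmetric difference, which your lexicographic formulation supplies).

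The genuine problem is the point you set aside as a ``secondary technical point.'' It is not secondary, and your suggested resolution is false as stated. $T_K(0,v)$, and hence $<_{\texttt{FPP}}$, are infima over \emph{all} paths in $\bbZ^d$, while $\mathscr{T}_\delta$ constrains only weights in $E_R$; your reduction lemma is silent about competitors that exit $B_R$. ``The invasion has not crossed $\partial B_R$'' does not routinely give that optimal paths stay in $E_R$: an edge outside $E_R$ can have weight within, say, $1/(K|E_R|)$ of an inside bottleneck weight $W$, in which case crossing it costs $e^{KW}(1+o(1))$ and the domination $|E_R|e^{-K\delta}\le 1$ is simply unavailable, so an exiting detour is not automatically beaten. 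What actually saves the statement is a dichotomy that itself needs proof: any exiting path must first traverse a path inside $E_R$ from $0$ to $\partial B_R$, whose maximal weight is at least the within-$E_R$ minimax value $V^*$ to the boundary, and $V^*$ is at least the largest invaded weight; if the inequality is strict the detour is dominated, but when $V^*$ equals the largest invaded weight (the boundary bottleneck is itself an invaded edge) one must peel off that common bottleneck and recurse on the far side \textemdash{} essentially re-running your induction jointly with the no-exit claim. So your step (iii) cannot be quarantined from this issue, and the missing argument is comparable in size to the main induction, not a peripheral check. Note that the paper's route avoids decomposing geodesics into paths precisely because all of its comparisons are between passage times of explicit edges of $E_R$ via Lemma \ref{lem:edgepassagetime} (though the paper is itself terse about why the minimizing adjacent edges it selects are invaded ones); if you want to keep your cleaner lexicographic architecture, you must add the boundary-minimax dichotomy above rather than assert that geodesics stay inside $B_R$.
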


    \begin{restatable}{restatelemma}{nolatesmallinvasions}
    \label{lem:NoLateSmallInvasions}
        Let $\mathscr{B}_{r,R}$ denote the event that there is a vertex in $B_r$ which is invaded after $v^{\texttt{IP}}_R$. For any $\epsilon>0$ and $r\ge 0$ there exists $R_0(r, \epsilon)\in\bbN$ such that for all $R\ge R_0$, 
        \begin{align*}
            \bbP[\mathscr{B}_{r,R}]<\epsilon.
        \end{align*}
    \end{restatable}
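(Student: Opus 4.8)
The plan is to prove Lemma~\ref{lem:NoLateSmallInvasions}, which asserts that with high probability no vertex of $B_r$ is invaded after the first vertex $v_R^{\texttt{IP}}$ of $\partial B_R$ is invaded. Let me think about the underlying probabilistic mechanism.

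The event $\mathscr{B}_{r,R}$ says some vertex in $B_r$ is invaded late—after the invasion has already reached distance $R$. The key intuition is that once the invasion cluster reaches $\partial B_R$, it is extremely unlikely to "come back" and invade a previously-uninvaded vertex near the origin. A vertex $x \in B_r$ being invaded after $v_R^{\texttt{IP}}$ means that at the step when $v_R^{\texttt{IP}}$ is invaded, $x$ is still outside the invaded cluster, yet the invaded cluster already contains a path from $0$ to distance $R$. I want to convert this into a statement in Bernoulli bond percolation.

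First I would recall the connection between invasion and bond percolation exploited in \cite{CCN87}. A central fact is that the invaded cluster almost surely contains the $p_{c,d}$-open cluster of the origin, and more usefully, the invasion eventually uses only edges of weight close to $p_{c,d}$; so for any $p > p_{c,d}$ the set of edges invaded using weight $\ge p$ is almost surely finite. The plan is to choose $p \in (p_{c,d}, 1)$ and split $\mathscr{B}_{r,R}$ according to whether the late-invaded vertex $x \in B_r$ is connected to the origin by a $p$-open path. If $x$ lies in the $p$-open cluster $C_p(0)$ of the origin, then $x$ is invaded early (at latest by the time the invasion has explored the finite cluster $C_p(0)$), and one argues this early invasion happens before the invasion can reach $\partial B_R$ for $R$ large, since reaching $\partial B_R$ requires traversing edges that are eventually all of weight near $p_{c,d} < p$. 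If $x \notin C_p(0)$, then there is a $p$-closed dual surface (or at least a $p$-closed cutset) separating $x$ from the origin inside the lattice; but then the invasion reaching $x$ at all forces it to cross a $p$-closed edge, which for $R$ large is improbable to happen late. The cleaner route is: the event that the origin's invasion ever uses an edge of weight $\ge p$ after reaching distance $R$ has probability tending to $0$ as $R\to\infty$, because for supercritical $p$ the invasion reaches arbitrarily large distances using only $p$-open edges with probability tending to $1$.

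The cleanest formalization I would pursue is the following. Fix $p>p_{c,d}$. On the supercritical event, with probability tending to $1$ as $R\to\infty$ there is a $p$-open path from $0$ reaching $\partial B_R$; once such a path exists, the invasion reaches $\partial B_R$ using only edges of weight $<p$, so $v_R^{\texttt{IP}}$ is invaded via an edge of weight $<p$, meaning every edge invaded up to and including $v_R^{\texttt{IP}}$ has weight $<p$ (by the monotone structure of invasion, the maximal weight invaded is nondecreasing only while the cluster is growing along its minimal-weight frontier). The point is that any vertex $x$ that is \emph{not} yet invaded when $v_R^{\texttt{IP}}$ is invaded is separated from the current invaded cluster by a frontier whose minimal weight exceeds all weights used so far; for $x$ to be invaded strictly later, the invasion must eventually use an edge of weight at least that frontier minimum, which is bounded below by the weight of the edge that invaded $v_R^{\texttt{IP}}$. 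I would then bound $\bbP[\mathscr{B}_{r,R}]$ by the probability that some vertex of $B_r$ fails to be connected to $\partial B_R$ by a $p$-open path through the origin's cluster, or equivalently that $C_p(0)$ fails to reach $\partial B_R$ while still touching $B_r$; since $p>p_{c,d}$, the probability that the origin's $p$-open cluster is finite yet nonempty is the regime to control, and standard supercritical connectivity estimates give that $\bbP[0 \leftrightarrow_p \partial B_R] \to \theta_d(p) >0$ with the uniform-on-$B_r$ version giving a decay we can make smaller than $\epsilon$.

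The main obstacle will be handling the case where the origin is \emph{not} in an infinite $p$-open cluster: then $C_p(0)$ is finite, and the invasion must eventually escape it using a $p$-closed edge, potentially leaving some $x \in B_r$ uninvaded until very late. I expect the right fix is to choose $p$ close enough to $1$ (not merely above $p_{c,d}$) so that with probability $\ge 1-\epsilon/2$ every vertex of the finite set $B_r$ is either in $C_p(0)$ or else permanently separated from the invasion's eventual trajectory by a $p$-closed cutset that the invasion crosses only finitely often and, crucially, \emph{before} reaching $\partial B_R$ for large $R$. Concretely, I would use that the maximal weight $W_\infty = \sup_e w(e)$ over invaded edges satisfies $W_\infty \le p_{c,d}$ almost surely along the infinite invasion, so for any fixed $\eta>0$ only finitely many invaded edges have weight exceeding $p_{c,d}+\eta$; all such "anomalous" high-weight invasions occur within some almost-surely finite random radius $\rho$, and $\bbP[\rho \ge R] \to 0$. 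Choosing $R_0$ so that $\bbP[\rho \ge R_0] < \epsilon$ and noting that any late invasion of $x\in B_r$ after $v_R^{\texttt{IP}}$ forces such a high-weight invasion event beyond radius $R$, I conclude $\bbP[\mathscr{B}_{r,R}]<\epsilon$. Making the "$\rho$ almost surely finite" claim precise—tying the finiteness of high-weight invasions to the \cite{CCN87} characterization and ensuring the late-invasion-of-$B_r$ event genuinely entails a beyond-radius-$R$ anomalous invasion—is where the real work lies.
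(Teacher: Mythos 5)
Your approach has a genuine gap, and in fact the lemma needs none of the percolation machinery you invoke. Two of your intermediate claims are false as stated. First, invasion weights are not monotone: when $v_R^{\texttt{IP}}$ is invaded, the edges incident to it join the frontier with fresh, unconditioned weights, so the frontier minimum at that moment is \emph{not} bounded below by the weight of the edge that invaded $v_R^{\texttt{IP}}$, nor by any weight used so far; a vertex of $B_r$ can perfectly well be invaded one step after $v_R^{\texttt{IP}}$ by a low-weight edge. Second, the claim that $\sup_e w(e)\le p_{c,d}$ over invaded edges holds almost surely is wrong (with positive probability all $2d$ edges at the origin have weight above $p_{c,d}$, and then the very first invaded edge does); only the $\limsup$ statement, equivalently ``finitely many invasions of weight above $p_{c,d}+\eta$,'' is true \cite{CCN87}. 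Most importantly, your concluding reduction---that a late invasion of some $x\in B_r$ after $v_R^{\texttt{IP}}$ forces an anomalous high-weight invasion \emph{beyond radius} $R$---is unjustified and false: the edge that invades $x$ lies inside $B_{r+1}$, so even when its weight exceeds $p_{c,d}+\eta$ the anomaly occurs next to the origin, not beyond radius $R$; and when its weight is at or below $p_{c,d}+\eta$, no anomalous invasion is forced at all. One could try to trade ``far in space'' for ``late in time'' (any anomaly occurring after step $t_R\ge R$ is improbable for large $R$, since the last high-weight invasion happens at an a.s.\ finite step), but then the case of an invading edge with weight in the near-critical window must be handled by a separate argument, and this is exactly the step you flag as ``where the real work lies'' and leave open.

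The paper's proof is entirely soft and avoids all of this. The events $\mathscr{B}_{r,R}$ are decreasing in $R$ (the connected invaded set must contain a vertex of $\partial B_R$ before it can contain one of $\partial B_{R+1}$, so being invaded after $v_{R+1}^{\texttt{IP}}$ implies being invaded after $v_R^{\texttt{IP}}$), and their intersection over all $R$ is empty: on any fixed configuration, the finitely many vertices of $B_r$ that are ever invaded are all invaded by some finite step $n$, whereas $v_{n+1}^{\texttt{IP}}$ cannot be invaded before step $n+1$ (after $n$ steps the invaded set has at most $n$ edges and so cannot reach distance $n+1$), so no vertex of $B_r$ is invaded after $v_{n+1}^{\texttt{IP}}$. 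Continuity from above then gives $\bbP[\mathscr{B}_{r,R}]\to 0$, which is the lemma---with no rate, but none is claimed. So the statement requires no supercriticality, no uniqueness of the infinite cluster, and no results from \cite{CCN87}; what it does require is nothing more than countable additivity and the observation that reaching $\partial B_R$ takes at least $R$ steps, whereas your sketch would deliver the conclusion only after substantial additional work that the paper never needs.
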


    \begin{proof}[Proof of Theorem \ref{thm:coupling}] 
     
    Observe that, for
    \begin{equation}\label{eqn:delta}
     \delta(R, \epsilon)=\frac{1-(1-\epsilon)^{\frac{1}{|E_R|}}}{|E_R|-1} 
    \end{equation}
    $\bbP[\mathscr{T}_{\delta}]=1-\epsilon$.
    This follows from calculating the probability that the minimum distance between $|E_R|$ independent uniform random variables on $[0,1]$ is at least $\delta$ \cite{E22}. 
    
    Let $\epsilon>0, r\ge 0$ be given. 
    By Lemma \ref{lem:Backbone} and \eqref{eqn:vertextoedge}, on the event $\mathscr{T}_{\delta}$, any vertex $v\in B_R$ with $T_{K}(O, v)<T_K(0, \partial B_R)$ must have been invaded, and hence 
    \begin{align*}
        \bbP[\text{IP contains }(K,R)\text{ log-uniform FPP on } B_R] &\ge \bbP[\mathscr{T}_{\delta(R, \epsilon)}]
        \\
        &= 1-\epsilon. 
    \end{align*}
    Similarly, by Lemma \ref{lem:Backbone}, on the event $\mathscr{T}_{\delta}$, for there to be an invaded vertex $v\in B_r$ with $T_{K}(O, v)\ge T_K(0, \partial B_R)$, $v$ must have been invaded after $v_R^{\texttt{IP}}$. By Lemma $\ref{lem:NoLateSmallInvasions}$, let $R_0$ be large enough so that for $R\ge R_0$ the probability that there is a vertex in $B_r$ invaded after $v^{\texttt{IP}}_R$ is at most $\epsilon/2$, and so for $R\ge R_0$, 
    \begin{align*}
        \bbP[(K, R)\text{ log-uniform FPP contains IP on } B_r] &\ge 1-\bbP[\mathscr{T}_{\delta(R, \epsilon/2)}^C\cup \mathscr{B}_{r,R}]
        \\
        &\ge 1-\bbP[\mathscr{T}_{\delta(R, \epsilon/2)}^C]- \bbP[\mathscr{B}_{r,R}]
        \\
        &\ge 1-\epsilon. \qedhere
    \end{align*}
    \end{proof}

    It is now straightforward to prove Corollary \ref{cor:FPPandBondPerc}. 
    First, assume that 
    \begin{align*}
        \inf_{x\in\bbZ^d}\liminf_{R\to\infty}\bbP[T_{K}(0, x)<T_K(0, \partial B_R)]=0,
    \end{align*} 
    and let $\epsilon>0$ be given. Fix $x_0\in\bbZ^d$ such that 
    \begin{align*}
        \liminf_{R\to\infty}\bbP[T_{K}(0, x_0)<T_K(0, \partial B_R)]<\epsilon/2
    \end{align*} and choose $r\ge 0$ such that $x_0\in B_r$. By Theorem \ref{thm:coupling}, there is an $R_0$ be such that for $R\ge R_0$,
    \begin{align*}
        \bbP[(K, R)\text{ log-uniform FPP contains IP on } B_r] \ge 1-\epsilon/2.
    \end{align*}

    Hence, for all $R\ge R_0$,
    \begin{align*}
        \bbP[\mathscr{I}(x_0)]\le \bbP[T_{K}(0, x_0)<T_K(0, \partial B_R)]+\epsilon/2
    \end{align*}
    and because $\liminf_{R\to\infty}\bbP[T_{K}(0, x_0)<T_K(0, \partial B_R)]<\epsilon/2$, we may find an $R_1\ge R_0$ such that $\bbP[T_{K}(0, x_0)<T_K(0, \partial B_{R_1})]<\epsilon/2$. Thus 
    \begin{align*}
        \bbP[\mathscr{I}(x_0)]\le \epsilon,
    \end{align*}
    and so 
    \begin{align*}
        \inf_{x\in\bbZ^d} \bbP[\mathscr{I}(x)]=0,
    \end{align*}
    giving, by \eqref{eqn:invasion_condition},
    \begin{align*}
        \theta_d(p_{c,d})=0. 
    \end{align*}

    Now assume that $\theta_d(p_{c,d})=0$, meaning $\inf_{x\in\bbZ^d}\bbP[\mathscr{I}(x)]=0$ by \eqref{eqn:invasion_condition}. Given $\epsilon>0$, fix $x_0\in\bbZ^d$ such that $\bbP[\mathscr{I}(x_0)]<\epsilon/2$ and note by Theorem \ref{thm:coupling}, for large $R$, 
    \begin{align*}
        \bbP[T_{K}(0, x_0)<T_K(0, \partial B_R)]&< \bbP[\mathscr{I}(x_0)] + \epsilon/2
        \\
        &<\epsilon.
    \end{align*}

    As this holds for all large $R$, $\liminf_{R\to\infty}\bbP[T_{K}(0, x_0)<T_K(0, \partial B_R)]<\epsilon$, and hence 
    \begin{align*}
        \inf_{x\in\bbZ^d}\liminf_{R\to\infty}\bbP[T_{K}(0, x)<T_K(0, \partial B_R)]=0.
    \end{align*}
    \section{Proofs of Lemmas} \label{sec:proofs_of_lemmas}
    
    In order to prove Lemma \ref{lem:Backbone}, we require the following lemma. 
    \begin{lemma}
    \label{lem:edgepassagetime}
    For any edge $e$ not adjacent to the origin,
    \begin{align*}
        T_K(0, e)=\min_{f\sim e}T_K(0, f)+\tau_{K, e}.
    \end{align*}
    \end{lemma}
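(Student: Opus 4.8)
The plan is to strip the common term $\tau_{K,e}$ from both sides and reduce the claim to the vertex-level identity
\[
\min\bigl(T_K(0,v_1),T_K(0,v_2)\bigr)=\min_{f\sim e}T_K(0,f),
\]
where $v_1,v_2$ are the endpoints of $e$. Adding $\tau_{K,e}$ to both sides and invoking the definition $T_K(0,e)=\min(T_K(0,v_1),T_K(0,v_2))+\tau_{K,e}$ then yields the lemma. I would prove this identity by establishing the two inequalities separately.

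For the inequality $\min(T_K(0,v_1),T_K(0,v_2))\ge\min_{f\sim e}T_K(0,f)$, I would take the endpoint realizing the left-hand minimum, say $v_1$, and consider a minimizing path $\gamma$ from $0$ to $v_1$. Since $e$ is not adjacent to the origin, $v_1\ne 0$, so $\gamma$ contains at least one edge; let $f^{\ast}$ be its final edge, with other endpoint $w$. Because $T_K(0,v_1)\le T_K(0,v_2)$, the edge $f^{\ast}$ cannot be $e$ (otherwise $T_K(0,v_1)=T_K(0,v_2)+\tau_{K,e}>T_K(0,v_2)$, a contradiction), so $f^{\ast}\sim e$. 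Using that the prefix of $\gamma$ is itself a geodesic to $w$, one checks $T_K(0,w)<T_K(0,v_1)$, whence $T_K(0,f^{\ast})=\min(T_K(0,v_1),T_K(0,w))+\tau_{K,f^{\ast}}=T_K(0,w)+\tau_{K,f^{\ast}}=T_K(0,v_1)$, which gives the bound.

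For the reverse inequality, I would fix an arbitrary $f\sim e$, incident to some endpoint $v_i$ of $e$ with other endpoint $w$, and bound $T_K(0,f)=\min(T_K(0,v_i),T_K(0,w))+\tau_{K,f}$ from below. If the inner minimum is $T_K(0,v_i)$ this is immediate since $\tau_{K,f}>0$; otherwise it equals $T_K(0,w)+\tau_{K,f}\ge T_K(0,v_i)$ by the triangle inequality, as a geodesic to $w$ followed by $f$ reaches $v_i$. In either case $T_K(0,f)\ge T_K(0,v_i)\ge\min(T_K(0,v_1),T_K(0,v_2))$, and taking the minimum over $f\sim e$ completes the identity.

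The main obstacle is the bookkeeping in the first inequality: verifying that the final edge of the geodesic is genuinely adjacent to $e$ rather than equal to $e$, and that it realizes the correct passage time. This is precisely where the hypothesis that $e$ is not adjacent to the origin is used\textemdash without it one endpoint could be $0$, the minimizing path would be empty with no final edge, and the identity would fail since every $T_K(0,f)$ is strictly positive.
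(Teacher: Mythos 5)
Your proof is correct, and its engine is the same as the paper's: both arguments take a geodesic $\gamma$ to the endpoint of $e$ with smaller passage time (non-empty precisely because $e$ is not adjacent to the origin), observe that its final edge cannot be $e$ and is therefore adjacent to $e$, and show that this final edge $f^{\ast}$ satisfies $T_K(0,f^{\ast})=\min\bigl(T_K(0,v_1),T_K(0,v_2)\bigr)$ --- this is exactly the paper's edge $e'$ with $T_K(0,e')=T_K(0,v_1)$. Where you genuinely differ is the remaining direction. The paper proves $T_K(0,e')\le T_K(0,f)$ for all $f\sim e$ by contradiction: a cheaper adjacent edge $f$ would produce, via the geodesic to $f$'s far endpoint, a path undercutting $\gamma$, violating its minimality among paths to either endpoint of $e$. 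You instead prove $T_K(0,f)\ge\min\bigl(T_K(0,v_1),T_K(0,v_2)\bigr)$ directly from the definition of $T_K(0,f)$, using positivity of $\tau_{K,f}$ in one case and the triangle inequality $T_K(0,v_i)\le T_K(0,w)+\tau_{K,f}$ in the other. Your organization --- isolating the vertex-level identity $\min_{f\sim e}T_K(0,f)=\min\bigl(T_K(0,v_1),T_K(0,v_2)\bigr)$ and proving two inequalities --- is more modular, and your direct lower bound is cleaner than the paper's contradiction argument, which in effect re-derives the triangle inequality inside a proof by contradiction. One small imprecision on your side: to rule out $f^{\ast}=e$ you assert the equality $T_K(0,v_1)=T_K(0,v_2)+\tau_{K,e}$, which presumes the prefix of $\gamma$ is a geodesic to $v_2$; the inequality $T_K(0,v_1)\ge T_K(0,v_2)+\tau_{K,e}$, which holds simply because the prefix is \emph{some} path to $v_2$, already yields the contradiction and needs no such claim.
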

    \begin{proof}
        Without loss of generality, for $v_1, v_2$ adjacent to $e$, neither of which are $0$, assume that $T_K(0, v_1)<T_K(0, v_2)$, and let $\gamma$ be the non-empty path from $0$ to $v_1$ for which $T_K(\gamma)=T_K(0, v_1)$. Because the passage time to $v_1$ is less than that of $v_2$, we know that $T_K(\gamma)\le T_K(\pi)$ for any path $\pi$ from $0$ to either $v_1$ or $v_2$. 

        There is a unique edge $e'\in \gamma$ adjacent to $e$, as if another was incident to $v_1$ then we could cut off a portion of the path contradicting the minimality of $\gamma$, and if another was incident to $v_2$, then $T_K(0, v_2)\le T_K(0, v_1)$. Let $v_1'$ be the vertex adjacent to $e'$ along with $v_1$. We claim that $T_K(0, e')=T_K(0, v_1')+\tau_{K, e'}$. Indeed as there is a path $\gamma\setminus\{e'\}$ from $0$ to $v_1'$ such that $T_K(\gamma\setminus\{e'\})< T_K(\gamma)$, we know that $T_K(0, v_1')<T_K(0, v_1)$. 

        We also claim that $T_K(0, v_1')=T_K(\gamma\setminus\{e'\})$. If there were to exist another path $\gamma'$ from $0$ to $v_1'$ with $T_K(\gamma')<T_K(\gamma\setminus\{e'\})$, then 
        \begin{align*}
            T_K(\gamma'\cup\{e'\})&=T_K(\gamma')+\tau_{K, e'}
            \\
            &< T_K(\gamma\setminus\{e'\})+\tau_{K, e'}
            \\
            &=T_K(\gamma).
        \end{align*}
        Because $\gamma'\cup\{e'\}$ forms a path from $0$ to $v_1$, this contradicts the minimality of $\gamma$. Thus we have that $T_K(0, v_1')=T_K(\gamma\setminus\{e'\})$

        Finally, we claim that $T_K(0, e)=T_K(0, e')+\tau_{K, e}$, and that $T_K(0, e')\le T_K(0, f)$ for all $f$ adjacent to $e$. Indeed, we know that
        \begin{align*}
            T_K(0, v_1)&=T_K(\gamma) 
            \\
            &= T_K(\gamma\setminus\{e'\}) + \tau_{K, e'} 
            \\
            &= T_K(0, v_1')+\tau_{K, e'} 
            \\
            &= T_K(0, e'),
        \end{align*} 
        and hence $T_K(0, e)=T_K(0, e')+\tau_{K, e}$. Moreover, if there exists some $f$ adjacent to $e$ such that $T_K(0, f)<T_K(0, e')$, then we know by an identical argument to the one above that a vertex $x\neq v_1, v_2$ which is adjacent to $f$ must be such that $T_K(0, f)=T_K(0, x)+\tau_{K, f}$. But then a path $\pi$ from $0$ to $x$ is such that $T_K(\pi\cup\{f\}) < T_K(\gamma)$, contradicting the minimality of $\gamma$ over all paths from $0$ to either $v_1$ or $v_2$. 
    \end{proof}
    
    \backbone*
    \begin{proof}
         We will prove this lemma by building up the orders $<_{\texttt{IP}}$ and $<_{\texttt{FPP}}$ from their minimal elements, and confirm that they are identical for all edges $e, e'<_{\texttt{IP}} v_R^{\texttt{IP}}$. Indeed, clearly if $e_{0}$ is the edge incident to $0$ of minimal weight, then $e_0$ is the minimum edge with respect to $<_{\texttt{IP}}$. It is also the minimum edge with respect to $<_{\texttt{FPP}}$, as for any vertex $x$ $T_K(0, x)\ge \min_{1\le k\le d} \{T_K(0, \pm v_k)\}$, where $v_k$ denote the standard basis vectors of $\bbR^d$. Proceeding via induction, assume that the two orderings are identical up to some edge $e_k$, and let $e_{k+1}$ be such that there is no $e'$ with $e_k<_{\texttt{IP}}e'<_{\texttt{IP}}e_{k+1}$. First, as the orderings are identical up to $e_k$, we have $e_k<_{\texttt{FPP}} e_{k+1}$ as well. Assume for the sake of contradiction that there is an $e'$ such that $e_k<_{\texttt{FPP}}e'<_{\texttt{FPP}} e_{k+1}$, and take $e'$ to be minimal in this respect. 
        
         First, $e'$ must have been adjacent to some edge $e'_{\texttt{adj}}\le_{\texttt{FPP}}e_k$. If $e'$ is adjacent to the origin, then we may take $e'_{\texttt{adj}}=e_0\le_{\texttt{FPP}}e_k$. If not, by Lemma \ref{lem:edgepassagetime} take $e'_{\texttt{adj}}$ adjacent to $e'$ such that 
         \begin{align*}
             T_K(0, e')=T_K(0, e'_{\texttt{adj}}) + \tau_{e', K}
         \end{align*}
         and thus $e'_{\texttt{adj}}<_{\texttt{FPP}}e'$, meaning $e'_{\texttt{adj}}\le_{\texttt{FPP}} e_k$ by the minimality of $e'$. 
         
        \begin{figure}[h!]
\[\begin{tikzcd}[arrows={line width=0.75pt, shorten >=-0.75ex, shorten <=-0.75ex}]
	\bullet & \bullet & \bullet & \bullet & \bullet \\
	\bullet & \bullet & \bullet & \bullet & \bullet \\
	\bullet & \bullet & {\raisebox{-0.15ex}{$\bullet_0$}} & \bullet & \bullet \\
	\bullet & \bullet & \bullet & \bullet & \bullet \\
	\bullet & \bullet & \bullet & \bullet & \bullet
	\arrow[no head, from=1-1, to=2-1]
	\arrow[no head, from=1-2, to=1-1]
	\arrow[no head, from=1-2, to=2-2]
	\arrow[no head, from=1-3, to=1-2]
	\arrow[no head, from=1-3, to=1-4]
	\arrow[no head, from=1-3, to=2-3]
	\arrow[no head, from=1-4, to=1-5]
	\arrow["{e'}", no head, from=1-4, to=2-4]
	\arrow[no head, from=1-5, to=2-5]
	\arrow[no head, from=2-1, to=2-2]
	\arrow[color={rgb,255:red,92;green,92;blue,214}, no head, from=2-2, to=2-3]
	\arrow["{e_{\texttt{adj}}'}"', color={rgb,255:red,214;green,92;blue,92}, no head, from=2-4, to=2-3]
	\arrow[no head, from=2-4, to=3-4]
	\arrow[no head, from=2-5, to=2-4]
	\arrow[no head, from=3-1, to=2-1]
	\arrow["{e_k}"', color={rgb,255:red,214;green,92;blue,92}, no head, from=3-1, to=3-2]
	\arrow[no head, from=3-1, to=4-1]
	\arrow[color={rgb,255:red,214;green,92;blue,92}, no head, from=3-2, to=2-2]
	\arrow[color={rgb,255:red,92;green,92;blue,214}, no head, from=3-2, to=4-2]
	\arrow[color={rgb,255:red,92;green,92;blue,214}, no head, from=3-3, to=2-3]
    \arrow["{e_{0}}", color={rgb,255:red,92;green,92;blue,214}, no head, from=3-3, to=3-2]
	\arrow["{e_{\texttt{adj}}^{(1)}}", color={rgb,255:red,214;green,92;blue,92}, no head, from=3-4, to=3-3]
	\arrow[no head, from=3-4, to=3-5]
	\arrow[no head, from=3-5, to=2-5]
	\arrow[color={rgb,255:red,214;green,92;blue,92}, no head, from=4-1, to=4-2]
	\arrow[no head, from=4-2, to=4-3]
	\arrow[no head, from=4-3, to=3-3]
	\arrow["{e_{\texttt{adj}}^{(0)}}"', color={rgb,255:red,214;green,92;blue,92}, no head, from=4-4, to=3-4]
	\arrow[no head, from=4-4, to=4-3]
	\arrow["{e_{k+1}}"', color={rgb,255:red,214;green,92;blue,92}, no head, from=4-4, to=4-5]
	\arrow[no head, from=4-5, to=3-5]
	\arrow[no head, from=5-1, to=4-1]
	\arrow[no head, from=5-2, to=4-2]
	\arrow[no head, from=5-2, to=5-1]
	\arrow[no head, from=5-3, to=4-3]
	\arrow[no head, from=5-3, to=5-2]
	\arrow[color={rgb,255:red,214;green,92;blue,92}, no head, from=5-4, to=4-4]
	\arrow[no head, from=5-4, to=5-3]
	\arrow[no head, from=5-4, to=5-5]
	\arrow[no head, from=5-5, to=4-5]
\end{tikzcd}\]
\caption{An example configuration in the coupled process in $\bbZ^2$. Edges $f$ drawn in red are such that $e'_{\texttt{adj}}\le f<_{\texttt{IP}}e'$, edges $g$ drawn in blue are such that $g<_{\texttt{IP}}e'_{\texttt{adj}}$. In this case, as $e^{(1)}_{\texttt{adj}}$ is adjacent to the origin and $e^{(1)}_{\texttt{adj}}>_{\texttt{IP}}e'_{\texttt{adj}}$, we stop our process at $e^{(2)}_{\texttt{adj}}=e_0$.}
\label{fig:edge_plot}
\end{figure}
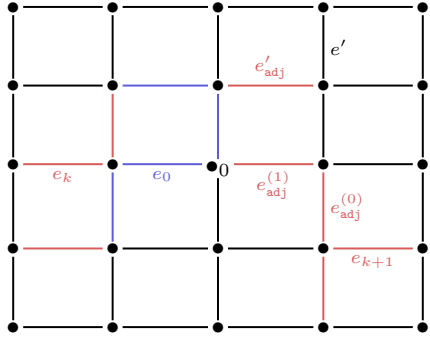

         By Lemma \ref{lem:edgepassagetime}, take $e_{\texttt{adj}}$ adjacent to $e_{k+1}$ such that $e_{\texttt{adj}}\le_{\texttt{IP}}e_k$ and $T_K(0, e_{k+1})=T_K(0, e_{\texttt{adj}})+\tau_{K,e_{k+1}}$ if $e_{k+1}$ is not adjacent to the origin, and $e_{\texttt{adj}}=e_0$ if it is. With $e^{(0)}_{\texttt{adj}}:=e_{\texttt{adj}}$, we may recursively find an edge $e^{(n+1)}_{\texttt{adj}}$ adjacent to $e^{(n)}_{\texttt{adj}}$ such that $e^{(n+1)}_{\texttt{adj}}<_{\texttt{IP}}e^{(n)}_{\texttt{adj}}$ and has the above property, stopping this procedure with the first (potentially 0) $N$ such that $e^{(N)}_{\texttt{adj}}\le_{\texttt{IP}} e'_{\texttt{adj}}$, see Figure \ref{fig:edge_plot}. We note $N+1< |E_R|$, as for $e_k, e_{k+1}<_{\texttt{IP}}v^{\texttt{IP}}_R$, all $e^{(i)}_{\texttt{adj}}$ are contained within $E_R$, and that there is no edge $e'_{\texttt{adj}}<_{\texttt{IP}}\Tilde{e}\le_{\texttt{IP}} e_{k+1}$ such that $w(\Tilde{e})>w(e')$, as $e'$ was adjacent to an already invaded edge and hence would have been chosen to be invaded before $\Tilde{e}$. 
         We let $e_{\max}$ be the edge of largest weight in $\{f: e'_{\texttt{adj}}<_{\texttt{IP}}f\le_{\texttt{IP}} e_{k+1}\}$, and note that by the above, $w(e')> w(e_{\max})$. 
         On the event $\mathscr{T}_{\delta}$, we must have that $w(e')\ge w(e_{\max})+\delta$, and hence 
         \begin{align*}
             \tau_{K,e'} = e^{K(R)w(e')}\ge e^{K(R)\delta}e^{K(R)w(e_{\max})} = |E_R| \tau_{K,e_{\max}}.
         \end{align*}
         Thus
         \begin{align*}
             T_K(0, e_{k+1}) &\le T_K(0, e^{(N)}_{\texttt{adj}}) + \tau_{K, e^{(N-1)}_{\texttt{adj}}} + \dots + \tau_{K, e^{(0)}_{\texttt{adj}}} + \tau_{K, e_{k+1}}
             \\
             &\le T_K(0, e'_{\texttt{adj}}) + (N+1)\tau_{K,e_{\max}}
             \\
             &\le T_K(0, e'_{\texttt{adj}}) + |E_R|\tau_{K,e_{\max}}
             \\
             &\le T_K(0, e'_{\texttt{adj}}) + \tau_{K,e'}
             \\
             &=T_K(0, e'),
         \end{align*}
         giving a contradiction to $e'<_{\texttt{FPP}}e_{k+1}$.
    \end{proof}
    
    \nolatesmallinvasions*
    \begin{proof} Assume to the contrary that there exists some $\epsilon>0$ and $r\ge0$ such that for any $R_0\in\bbN$ there exists an $R\ge R_0$ such that $\bbP[\mathscr{B}_{r,R}]\ge \epsilon$.

    Because $\mathscr{B}_{r,R+1}\subset \mathscr{B}_{r,R}$, we get that $\bbP[\mathscr{B}_{r,R+1}]\le \bbP[\mathscr{B}_{r, R}]$ for all $R$. Our assumption gives us that there exists a sequence of $R_k$'s such that $\bbP[\mathscr{B}_{r,R_k}]\ge \epsilon$, and hence we may conclude that $\bbP[\mathscr{B}_{r, R}]\ge \epsilon$ for all $R\ge R_0$.

    Let $\mathscr{B}_r = \cap_{R\ge R_0} \mathscr{B}_{r,R}$. Then because $\mathscr{B}_{r,R}$ is a decreasing sequence of events, 
    \begin{align*}
        \bbP[\mathscr{B}_r] = \lim_{R\to\infty} \bbP[\mathscr{B}_{r,R}]\ge \epsilon.
    \end{align*}
    We claim that $\mathscr{B}_r=\emptyset$. To see this, we consider any percolation configuration $\sigma\in\mathscr{B}_r$. We have that $\sigma\in\mathscr{B}_{r,R}$ for all $R\ge R_0$, and so there is a vertex in $B_r$ which is invaded after $v_R^c$ for all $R\ge R_0$. Because there are a finite number of vertices in $B_r$, we may let step $n\ge R_0$ be the step when the final vertex $v\in B_r$ is invaded for the configuration $\sigma$. But then $v_{n+1}^c$ could not have been invaded before $v$, contradicting $\sigma\in \mathscr{B}_{r, n+1}$. Therefore there can be no $\sigma\in\mathscr{B}_r$.  
    
    This gives us 
    \begin{align*}
        0=\bbP[\emptyset]=\bbP[\mathscr{B}_r]\ge \epsilon>0,
    \end{align*}
    a contradiction.
    \end{proof}

    \section{Simulation of First Passage Percolation}
    In this section we provide some simulations of the behavior of log-uniform distributed first passage percolation in $\bbZ^2$ for $K(R, 0.01)$ as defined in Theorem \ref{thm:coupling}, with the code used to run these simulations contained within this Github repository: \href{https://github.com/aldomorelli/UNM-Percolation-Research-2025}{Log Uniform First Passage Percolation Simulation}. Trials of the first passage process were ran until time $t=T_{K}(0, \partial B_R)$, and for each $x\in B_R$ the proportion of trials where the event $T_K(0, x)< T_K(0, \partial B_R)$ occurred was recorded. A plot of these proportions for $R=100$ is included in Figure \ref{fig:propinfectedplot}. The level sets of values away from $0$ appear to be circular although the boundary being compared to was the $\ell_1$ ball.

\begin{figure}[htbp]
    \centering
    \begin{subfigure}[t]{0.45\textwidth}
        \centering
        \includegraphics[width=\linewidth]{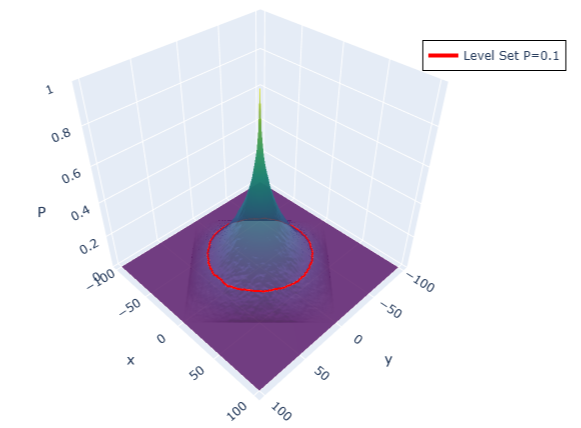}
        \caption{Boundary $\partial B_{100}$}
        \label{fig:propinfectedplot}
    \end{subfigure}
    \quad
    \begin{subfigure}[t]{0.45\textwidth}
        \centering
        \includegraphics[width=\linewidth]{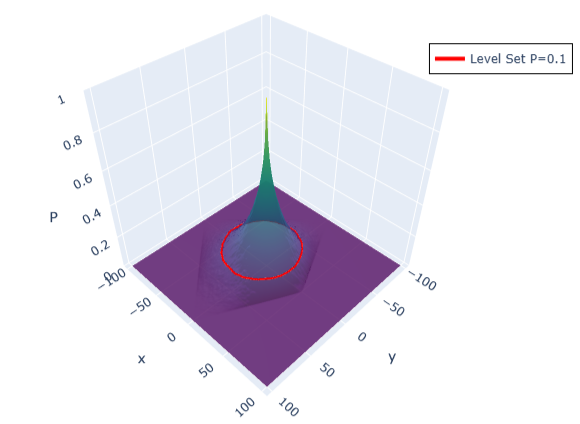}
        \caption{\parbox[t]{.9\linewidth}{Boundary $-x+|y|=100$ for $x\le 0$, \\
        $2x+|y|=100$ for $x>0$}}
        \label{fig:lopsidedplot}
    \end{subfigure}
    \caption{Simulations recording the number $P$ of trials $(n=10000)$ where the vertex with coordinates $(x,y)$ had passage time less than the passage time to the given boundary. The level curve $P=0.1$ is given in red. Outside of the boundary, the proportions are $0$ as expected, and the level curves away from $0$ of both distributions appear to be circular.}
\end{figure}

  Simulations were performed with lopsided boundary conditions appear to show similar results, as demonstrated in Figure \ref{fig:lopsidedplot}. We think such results may be interesting in connection to a limiting shape in first passage percolation, as the coupling with invasion percolation Theorem \ref{thm:coupling} might be utilized.

    Restricting to the slice $y=0$, Figure \ref{fig:slicecomparison} shows the proportions of vertices $(x,0)$ with passage times less than $T_{K}(0, \partial B_R)$ for $R=100,200, 500, 1000$, where $[-R,R]$ has been scaled to $[-1,1]$.
    The resulting curve resembles a curve of the form $1-|x|^{\alpha(R)}$ for some $0<\alpha(R)<1$. 
    A regression of this form for $R=1000$ is given in Figure \ref{fig:regression}, which was accomplished by finding the least squares solution $\alpha$ to the system $\alpha \log|x|=\log(1-P)$. We note that as a consequence of Theorem \ref{thm:coupling}, because not every vertex has probability $1$ of being invaded, if such an $\alpha(R)$ exists, then $\alpha(R)\to 0$ as $R\to\infty$. If the overall distribution is approximately radially symmetric regardless of boundary, this restriction to $y=0$ should be fairly representative of all directions. 

    \begin{figure}[h!] 
\centering
\begin{subfigure}{.45\textwidth}
  \centering
  \includegraphics[width=\linewidth]{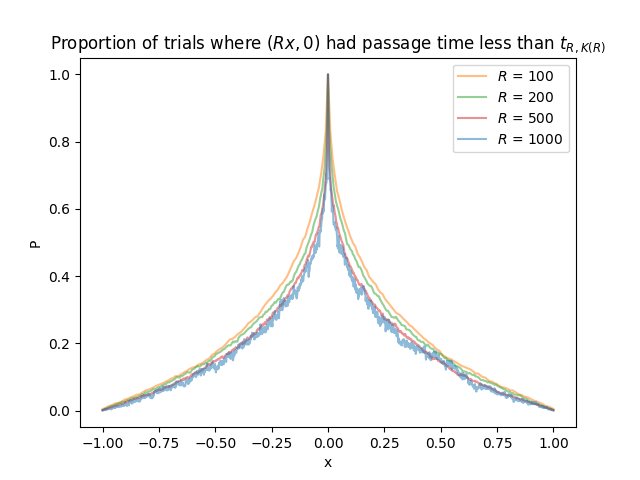}
  \phantomsubcaption
  \label{fig:slicecomparison}
\end{subfigure}
\quad
\begin{subfigure}{.45\textwidth}
  \centering
  \includegraphics[width=\linewidth]{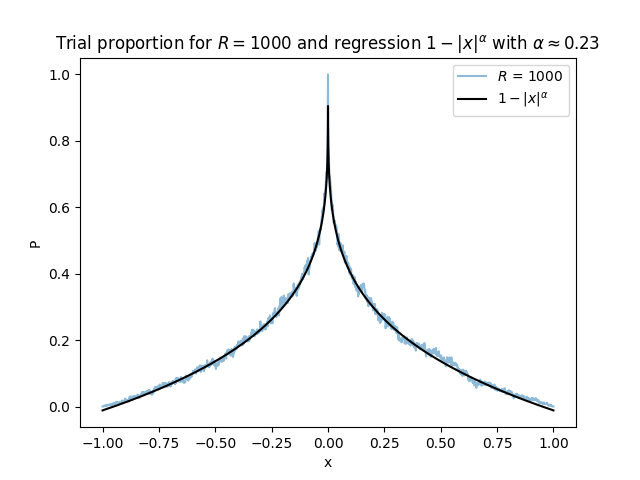}
  \phantomsubcaption
  \label{fig:regression}
\end{subfigure}
\caption{On the left, figure \ref{fig:slicecomparison} records the proportion of trials for which the vertices $(Rx,0)$ for $x=k/R$ with $k\in \{-R, \dots, R\}$ had passage time at most $T_{K}(0, \partial B_R)$ for $R=100,200, 500,1000$. We see that the shapes resemble that of $1-|x|^{\alpha(R)}$ for some exponent $\alpha(R)$. On the right, figure \ref{fig:regression} shows in blue is the proportion of trials for which the vertices $(1000x,0)$ for $x=k/1000$ with $k\in \{-1000, \dots, 1000\}$ had passage time at most $T_{K}(0, \partial B_{1000})$. In black is a regression of the form $1-|x|^{\alpha}$ with $\alpha\approx 0.23$ and correlation coefficient $r=0.998$.}
\end{figure}

    \section*{Acknowledgements}
    I would like to thank Dr.\ Sarah Percival for her guidance and support throughout this work and Dr.\ Matthew Junge for his comments on a draft of this manuscript. I am also grateful to Dr.\ Charles Newman, and Joshua Meisel for their helpful discussions. I would like to thank the UNM Center for Advanced Research Computing, supported in part by the National Science Foundation, for providing the research computing resources used in this work. Research reported in this publication was supported by the National Cancer Institute of the National Institutes of Health under Award Number U54CA272167. The content is solely the responsibility of the author and does not necessarily represent the official views of the National Institutes of Health.
    
    \bibliographystyle{alpha}
    \bibliography{references}

\newcommand{\etalchar}[1]{$^{#1}$}
\begin{thebibliography}{CKLW82}

\bibitem[ADH16]{ADH16}
A.~Auffinger, M.~Damron, and J.~Hanson.
\newblock 50 years of first passage percolation, 2016.

\bibitem[BH57]{BH57}
S.~Broadbent and J.~M. Hammersley.
\newblock Percolation processes.
\newblock {\em Mathematical Proceedings of the Cambridge Philosophical Society}, 53:629 -- 641, 1957.

\bibitem[CCN87]{CCN87}
J.~T. Chayes, L.~Chayes, and C.~M. Newman.
\newblock Bernoulli percolation above threshold: An invasion percolation analysis.
\newblock {\em The Annals of Probability}, 15(4):1272--1287, 1987.

\bibitem[CJLP24]{CJLP24}
S.~Chiaradonna, P.~Jevti{\'c}, N.~Lanchier, and S.~Pesic.
\newblock Framework for cyber risk loss distribution of client-server networks: A bond percolation model and industry specific case studies.
\newblock {\em Applied Stochastic Models in Business and Industry}, 40(6):1712--1733, November 2024.

\bibitem[CKLW82]{CKLW82}
R.~Chandler, J.~Koplik, K.~Lerman, and J.F. Willemsen.
\newblock Capillary displacement and percolation in porous media.
\newblock {\em Journal of Fluid Mechanics}, 119:249–267, 1982.

\bibitem[DC18]{DC18}
H.~Duminil-Copin.
\newblock Introduction to {B}ernoulli percolation, October 2018.

\bibitem[DLW15]{DLW15}
M.~Damron, W.~Lam, and X.~Wang.
\newblock Asymptotics for {$2D$} critical first passage percolation, 2015.

\bibitem[Ear22]{E22}
Mike Earnest.
\newblock Average minimum distance between $n$ points generate i.i.d. with uniform dist.
\newblock Mathematics Stack Exchange, 2022.
\newblock URL:https://math.stackexchange.com/q/2001026 (version: 2022-09-13).

\bibitem[Mey07]{ML07}
L.~Meyers.
\newblock Contact network epidemiology: Bond percolation applied to infectious disease prediction and control.
\newblock {\em Bulletin (New Series) of the American Mathematical Society}, 44, 02 2007.

\bibitem[WW83]{WW83}
D.~Wilkinson and J.~F. Willemsen.
\newblock Invasion percolation: a new form of percolation theory.
\newblock {\em Journal of Physics A: Mathematical and General}, 16(14):3365, oct 1983.

\bibitem[ZYH{\etalchar{+}}24]{ZYH24}
X.~G. Zheng, I.~Yamauchi, M.~Hagihala, E.~Nishibori, T.~Kawae, I.~Watanabe, T.~Uchiyama, Y.~Chen, and C.~N. Xu.
\newblock Unique magnetic transition process demonstrating the effectiveness of bond percolation theory in a quantum magnet.
\newblock {\em Nature Communications}, 15(1):9989, Nov 2024.

\end{thebibliography}
\end{document}